\newtheorem*{thm}{Theorem}
\newtheorem*{prop}{Proposition}
\newtheorem*{lemma}{Lemma}
\theoremstyle{definition}
\theoremstyle{remark}
\newcommand{\spin}{\mathfrak{s}}
\newcommand{\D}{\mathcal{D}}
\newcommand{\matZ}{\mathbb{Z}}
\newcommand{\matQ}{\mathbb{Q}}
\begin{document}

\title[Adjunction and Davis]{Adjunction inequalities \\ and the Davis hyperbolic four-manifold}

\author{Francesco Lin}
\address{Department of Mathematics, Columbia University} 
\email{flin@math.columbia.edu}

\author{Bruno Martelli}
\address{Dipartimento di Matematica, Universit\`a di Pisa} 
\email{bruno.martelli@unipi.it}
\begin{abstract}
The Davis hyperbolic four-manifold $\D$ is not almost-complex, so that its Seiberg-Witten invariants corresponding to zero-dimensional moduli spaces are vanishing by definition. In this paper, we show that all the Seiberg-Witten invariants involving higher-dimensional moduli spaces also vanish. Our proof involves the adjunction inequalities corresponding to 864 genus two totally geodesic surfaces embedded inside $\D$.
\end{abstract}

\maketitle

\section*{Introduction}

LeBrun conjectured in \cite{LeBrun} that the Seiberg-Witten invariants \cite{Mor} of hyperbolic four-manifolds vanish. This question is closely related to several open problems in the field, and most notably it implies that there do not exist hyperbolic four-manifolds which are surface bundles over surfaces (cf. \cite{Reid}, and see also the recent construction of atoroidal surface bundles over surfaces \cite{KL}).
\par
The Seiberg-Witten invariants of hyperbolic four-manifolds are especially difficult to determine because the description of the latter is usually not well-suited for the standard tools in the subject. Examples of manifolds with vanishing Seiberg-Witten invariants were provided in \cite{AgolLin, BFS} by exhibiting separating $L$-spaces; of course, this Floer-theoretic approach is in general very hard to implement for a given manifold of interest.
\\
\par
In this paper we consider the Davis hyperbolic four-manifold \cite{Davis}, which we denote by $\D$. It is obtained by identifying the opposite faces of a $120$-cell (a remarkable four-dimensional regular polytope), and is arguably the closed hyperbolic four-manifold with the simplest description. Its topological and geometric properties have been studied in detail in \cite{RT}; in particular it is shown that
\begin{equation}\label{homology}
H_1(\D;\mathbb{Z})=\mathbb{Z}^{24}\text{ and }H_2(\D;\mathbb{Z})=\mathbb{Z}^{72}.
\end{equation}
All hyperbolic four-manifolds have signature zero by the Hirzebruch signature formula, so in particular $b^+(\D)=36\geq 2$ and the Seiberg-Witten invariants are defined. Our main result is then the following.
\begin{thm}
The Seiberg-Witten invariants of the Davis hyperbolic four-manifold $\D$ are all zero. Here we consider all the Seiberg-Witten invariants obtained by evaluating classes in $\Lambda^*(H_1/\mathrm{tors})\otimes\mathbb{Z}[U]$ (the cohomology ring of the moduli space of irreducible configurations) on moduli spaces of solutions, for all spin$^c$ structures. 
\end{thm}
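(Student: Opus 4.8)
The plan is to combine the \emph{higher type adjunction inequalities} in Seiberg--Witten theory -- the extension of the usual adjunction inequality to positive--dimensional moduli spaces and to the full invariant on $\Lambda^*(H_1/\mathrm{tors})\otimes\matZ[U]$, due to Ozsv\'ath and Szab\'o -- with the geometry of the $864$ totally geodesic genus two surfaces $\Sigma_1,\dots,\Sigma_{864}$ embedded in $\D$. The first observation is that each $\Sigma_i$ has trivial normal bundle: being totally geodesic it has vanishing second fundamental form, and since $\D$ has constant curvature $-1$ the Ricci equation forces the normal connection on $\nu\Sigma_i$ to be flat, so that $[\Sigma_i]^2=\mathrm{eul}(\nu\Sigma_i)=0$. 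Now suppose $\spin$ is a spin$^c$ structure for which \emph{some} Seiberg--Witten invariant $SW_{\D,\spin}(z)$, $z\in\Lambda^*(H_1/\mathrm{tors})\otimes\matZ[U]$, is non--zero. Since $[\Sigma_i]^2=0\ge 0$ and $g(\Sigma_i)=2\ge 1$, the adjunction inequality applies to each $\Sigma_i$ and yields
\[
\bigl|\langle c_1(\spin),[\Sigma_i]\rangle\bigr|\ \le\ 2g(\Sigma_i)-2-[\Sigma_i]^2\ =\ 2\qquad(i=1,\dots,864).
\]

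Next I would extract a dimension obstruction from the topology of $\D$. From \eqref{homology} and Poincar\'e duality one gets $\chi(\D)=1-24+72-24+1=26$, and $\sigma(\D)=0$ by the signature formula, so the moduli space associated with $\spin$ has formal dimension $d(\spin)=\tfrac14\bigl(c_1(\spin)^2-52\bigr)$; moreover $c_1(\spin)^2\equiv 0\pmod 8$, since $c_1(\spin)$ is a characteristic vector of the unimodular intersection form and $\sigma(\D)=0$. A non--trivial invariant forces $d(\spin)\ge 0$, hence $c_1(\spin)^2\ge 52$, hence in fact $c_1(\spin)^2\ge 56$ -- the value $52$ being impossible mod $8$, which is also why $\D$ is not almost--complex. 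Therefore it suffices to prove that no class $x\in H^2(\D;\mathbb{R})$ with $x^2\ge 56$ satisfies $|\langle x,[\Sigma_i]\rangle|\le 2$ for every $i$; combined with the previous paragraph this is the sought contradiction, and shows $SW_{\D,\spin}\equiv 0$ for all $\spin$.

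This last step is the heart of the matter, and it is a (large but finite) problem about an indefinite quadratic form on a polytope. Using the explicit combinatorial model of $\D$ coming from the $120$--cell and the description of the surfaces in \cite{RT}, one writes down the $864$ classes $[\Sigma_i]\in H_2(\D;\matZ)\cong\matZ^{72}$ and the intersection form $Q$, and checks that the $[\Sigma_i]$ span $H_2(\D;\matQ)$, so that $P=\{x:|\langle x,[\Sigma_i]\rangle|\le 2\ \forall i\}$ is a bounded polytope; one then has to verify $\max_{x\in P}x^2<56$ for the form $Q$ (of signature $(36,36)$), or, equivalently, exhibit for each characteristic $c$ with $c^2\ge 56$ some $\Sigma_i$ with $|\langle c,[\Sigma_i]\rangle|\ge 3$. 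I expect the main obstacle to be carrying this out in practice: the number of constraints and variables is too large for a naive search, and one should instead exploit the large isometry group of $\D$ inherited from the symmetry group of the $120$--cell -- the $864$ surfaces split into a few orbits and $Q$, $P$ are equivariant -- to reduce the verification to a manageable computation on orbit representatives. Producing an explicit, checkable description of $H_2(\D;\matZ)$, of $Q$, and of the $864$ surface classes precise enough for this is where the real work lies.
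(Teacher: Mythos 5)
Your overall strategy coincides with the paper's: trivial normal bundles for the $864$ totally geodesic genus two surfaces, the adjunction inequality $|\langle c_1(\spin),[\Sigma_i]\rangle|\le 2$ for any spin$^c$ structure with some non-vanishing invariant (the paper gets this for all higher invariants from the neck-stretching/emptiness argument of \cite[Ch.~40]{KM}; citing Ozsv\'ath--Szab\'o is an acceptable alternative), the dimension formula $d(\spin)=\tfrac14(c_1(\spin)^2-52)$ with $\chi=26$, $\sigma=0$, and the mod $8$ observation. So the reduction of the Theorem to a finite quantitative statement is the same as in the paper.

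The genuine gap is that this quantitative statement -- the paper's Proposition, that every class satisfying all $864$ inequalities has $|c_1(\spin)^2|\le 32$ (you only need to exclude $c_1(\spin)^2\ge 52$) -- is precisely the hard part, and your sketch of it is not adequate. First, your reformulation ``verify $\max_{x\in P}x^2<56$ over the real polytope $P$'' is not equivalent to the statement you need about even integral classes (since $\D$ is spin, characteristic $=$ even): the real maximum could a priori exceed $56$ while the lattice statement still holds, and the paper neither needs nor proves the continuous bound. Second, and more importantly, appealing to the isometry group alone does not make the verification feasible: the paper's key enabling lemma is that, writing $x=PD(c_1(\spin))=\sum_i x_i a_i$ in the basis of the $72$ surfaces $a_i$, evenness of $c_1(\spin)$ gives $Q(x,A_j)=2x_j\in 2\matZ$, and the inequalities $|Q(x,A_j)|\le 2$ then force $x_j\in\{-1,0,1\}$; only after this reduction to the cube $\{-1,0,1\}^{72}$ do the remaining $792$ inequalities (with their special shapes coming from the intersection patterns of the $a_i$, $b_i$, $B_i$) and the $28800$ symmetries allow a branch-and-prune enumeration that terminates (in about a month of CPU time, yielding $|Q(x,x)|\in\{0,8,16,24,32\}$). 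Without this lemma, or some substitute, and without actually carrying out the computation, your argument establishes the reduction but not the Theorem.
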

Let us point out that in \cite{RT} it is shown that $\D$ is not almost-complex, so that there are no spin$^c$ structures with zero-dimensional moduli spaces and therefore the corresponding Seiberg-Witten invariants are zero by definition. Also, the statement holds for either orientation because $\D$ admits an orientation reversing isometry.
\\
\par
Unlike the three-manifold based approach of \cite{AgolLin,BFS}, our proof will exploit the topology of the surfaces contained in $\D$. In \cite{RT} the authors describe a collection of 864 embedded genus two totally geodesic surfaces contained in $\D$; by Chern-Weil theory, such surfaces have self-intersection zero because the Levi-Civita connection on the normal bundle is flat. Each such surface $S$ therefore implies the adjunction inequality
\begin{equation}\label{adjunction}
|\langle c_1(\spin),[S]\rangle|\leq 2g(S)-2=2
\end{equation}
for any spin$^c$ structure $\spin$ admitting some non-vanishing Seiberg-Witten invariant. This follows from the same proof as in the zero-dimensional case \cite[Ch. 40]{KM}, as the argument shows that the moduli space is empty for a suitable metric if such inequality does not hold. 
\par
On the other hand, the dimension of the moduli space of solutions for $\spin$ is given \cite[Ch. 1.4]{KM} by
\begin{equation*}
\frac{c_1(\spin)^2-2\chi-3\sigma}{4}=\frac{c_1(\spin)^2-52}{4}
\end{equation*}
as $\chi(\D)=26$ by (\ref{homology}) and $\sigma(\D)=0$. The Theorem is then proved by showing that for the spin$^c$ structures satisfying all the adjunction inequalities (\ref{adjunction}), one has
\begin{equation}\label{ineq}
c_1(\spin)^2 < 52,
\end{equation}
so that the corresponding moduli space has negative expected dimension, and therefore the corresponding Seiberg-Witten invariants all vanish. In fact, we will prove that $|c_1(\spin)^2| \leq 32$.
\\
\par
The genus two surfaces we consider have a very nice description in terms of the geometry of the 120-cell; we review it in Section \ref{genustwo}. Given this, the main technical complication is the sheer complexity of the problem under consideration arising from the size of $H_2$. To give a rough idea, the simplest $72$ inequalities we consider will allow us reduce ourselves to study spin$^c$ structures parametrized by the standard cube in $\mathbb{Z}^{72}$ with coordinates in $\{-1,0,1\}$. This contains $3^{72}\approx 2\times 10^{34}$ lattice points; enumerating and testing all the spin$^c$ structures corresponding to these points is out of reach for any computer available today. However, we will exploit the rich combinatorial structure of the collection of surfaces and the corresponding additional constraints together with the large symmetry group of the Davis hyperbolic manifold to reduce the size of the set from $\approx 2\times 10^{34}$ to a much smaller number, so that the whole computation can be completed in roughly one month of CPU time; this is explained in Section \ref{computation}.
\\
\par
Because a hyperbolic four-manifold $X$ always has $\chi(X)>0$ (by Chern-Gauss-Bonnet) and $\sigma(X)=0$, the overall strategy of our proof can be in principle applied to more general situations of hyperbolic four-manifolds containing large collections of embedded totally geodesic surfaces. For context, let us point out that all compact arithmetic hyperbolic $4$-manifolds are of the simplest type \cite[Proposition 6.4.5]{Morris} and therefore admit infinitely many \textit{immersed} totally geodesic surfaces.
\par
Of course, the computational complexity of the argument in the (arguably) simplest case of the Davis manifold $\D$ suggests that it might be challenging to implement the same strategy in other examples. However, in more general situations it could be helpful to consider also the higher type adjunction inequalities proved by Ozsv\'ath and Szab\'o in \cite{OzSz}, which provide some refinements of (\ref{adjunction}) in terms of the map induced by the inclusion $S\hookrightarrow X$ in $H_1$.
\\
\par
\textit{Acknowledgements.} The first author was partially supported by NSF grant DMS-2203498. The second author was
partially supported by INdAM through GNSAGA, and by MUR through the PRIN project ``Geometry and topology of manifolds''.
\vspace{0.4cm}

\section{Genus two surfaces in the Davis hyperbolic four-manifold}\label{genustwo}
In this section we describe the Davis manifold $\D$, its isometry group, and many embedded totally geodesic surfaces of genus two. The isometry group and the surfaces are taken from \cite{RT}. Here we equip these surfaces with an orientation and we compute their algebraic intersections.

\subsection{The Davis manifold}
The $120$-cell $P$ is a regular four-dimensional polytope having $120$ regular dodecahedra as facets. We refer the reader to \cite{SchSeg} for a nice discussion of its beautiful properties. In particular, its projection to $S^3$ is the Voronoi cellulation of the binary icosahedral group $I_{120}^*$ (the preimage under the double cover $S^3=SU(2)\rightarrow SO(3)$ of the subgroup $A_5$ corresponding to the orientation preserving isometries of the dodecahedron). It has $720$ pentagonal ridges, $1200$ edges and $600$ vertices.
\par
The $120$ facets come in $60$ pairs of parallel facets, and the Davis hyperbolic four-manifold $\D$ is obtained by identifying these pairs by translation. Under this identification, the $720$ ridges are identified in $144$ groups of $5$, the $1200$ edges in $60$ groups of $20$, and all the vertices are identified into a single vertex $v$. Therefore
\begin{equation*}
\chi(\D) = 1-60+144-60+1 = 26.
\end{equation*}
The hyperbolic metric is obtained by realizing the $120$-cell as a polytope in hyperbolic space with dihedral angles of $2\pi/5$.
\par

\subsection{The isometry group}
The orientation-preserving isometry group of $\D$ fits in the short exact sequence
\begin{equation*}
1\rightarrow \mathrm{Isom}^+(P)\rightarrow \mathrm{Isom}^+(\D)\rightarrow \mathbb{Z}/2\rightarrow 1
\end{equation*}
where $|\mathrm{Isom}^+(P)|=14400$. This sequence splits, but not canonically; the elements in $\mathrm{Isom}^+(\D)\setminus \mathrm{Isom}^+(P)$ are called \emph{inside-out isometries} and they exchange the center of $P$ with the unique vertex $v$ of the cell decomposition \cite{RT}.
\\
\par
We now describe some totally geodesic genus two surfaces embedded in $\D$, following \cite{RT}. We refer the reader to \cite{RT,SchSeg} for pictures. There are in fact four kinds of such surfaces. 

\subsection{Two classes of surfaces}
The first kind of surfaces is defined as follows. Pick a dodecahedral facet $D$ of $P$ and two opposite pentagonal faces of it. Consider the geodesic segment in $D$ connecting orthogonally these two faces.
The segment can be prolonged in the adjacent dodecahedral facets of $P$ sharing the pentagonal faces; each of these paths closes up after traversing $10$ dodecahedra. In the spherical picture, this corresponds to a great circle passing through $10$ of the $120$ points in $I_{120}^*$; it is shown in \cite{SchSeg} that these circles can be used to define discrete versions of the Hopf fibration $S^3\rightarrow S^2$. In the hyperbolic version of $P$, this path is the boundary of a regular hyperbolic decagon with angles $2\pi/5$ contained in $P$, which under the identification gives rise to an embedded totally geodesic surfaces of genus $2$ in $\D$.

There are $120\times 6/10=72$ such closed paths (where $6$ is the number of pairs of opposite pentagonal faces in a dodecahedron), and they produce 72 embedded totally geodesic surfaces $a_1, \ldots, a_{72}$ of genus $2$ in $\D$. 
\\
\par
We now define the second kind of surfaces. We first note that every ridge in $P$ is a regular pentagon with angle $\pi/5$. Each of the closed paths just considered crosses $10$ pentagonal ridges at their center; it turns out that these are identified in $2$ groups of $5$, which alternate along the closed path, and give rise to two pentagons in $\D$ whose union is an embedded totally geodesic surface of genus 2. The 72 closed paths give rise in this way to 72 embedded totally geodesic surfaces $A_1, \ldots, A_{72}$ in $\D$, again of genus two.
\\
\par
Summing up, there are 72 great circles, and each gives rise to two surfaces $a_i$ and $A_i$, for $i=1,\ldots, 72$. The surfaces $a_i$ and $A_j$ intersect only if $i=j$, and they do so transversely into two points, that are the centers of two pentagons, with the same sign. Therefore their algebraic intersection is $\pm 2$, depending on the orientations that we assign to $a_i$ and $A_i$. Assigning an orientation to $a_i$ is equivalent to assigning an orientation to the corresponding great circle. There does not seem to be a canonical way to orient all the 72 great circles, so we have chosen arbitrarily an orientation for them, and we have oriented the surfaces $A_i$ accordingly so that
\begin{equation}
Q( a_i, A_j)=2 \delta_{ij}.
\end{equation}
Here $Q$ denotes the intersection form on $H_2(\D, \matZ)$, and for simplicity we will drop the square brackets from the notation of the homology class of surfaces here and in what follows.
\par
We know from \cite{RT} that the oriented surfaces $A_i$ forms a basis for $H_2(\D, \matQ)$, and they generate a subgroup of $H_2(\D, \matZ) \cong \matZ^{72}$ of index $2^{36}$. Any inside-out isometry of $\D$ interchanges the sets $\{a_i\}$ and $\{A_i\}$, however without preserving the index $i$. Therefore the cycles $\{a_i\}$ also generate a subgroup of $H_2(\D, \matZ)$ of index $2^{36}$, coherently with the fact that the $72 \times 72$ diagonal matrix $[Q(a_i, A_j)]$ has determinant $2^{72}$. Furthermore, we have
\begin{equation}\label{intform}
Q( a_i, a_j)=
\begin{cases}
0 \text{ if the corresponding great circles intersect,}\\
\pm 1 \text{ otherwise. }
\end{cases}
\end{equation}
To see this, we note that if the great circles intersect the two surfaces $a_i$ and $a_j$ can be isotoped to be disjoint, while if they do not intersect the two surfaces $a_i$ and $a_j$ intersect transversely in the center of the 120-cell $P$. The sign of the intersection is the linking number of the two oriented great circles.
For a given $a_i$, there are exactly $26$ classes $a_j$ that have intersection $0$ with it: the class $a_i$ itself, and those corresponding to the $10\times 5/2=25$ great circles intersecting it. 

Summing up, the $72\times 72$ symmetric intersection matrix $Q_{ij} = Q(a_i,a_j)$ contains only the numbers $0,1,-1$, with exactly $25$ zeroes in every line and column, and it has determinant $2^{72}$. We have determined $Q$ explicitly using a computer code.

\subsection{Two more classes of surfaces}
In order to find more constraints, we will need more surfaces. We keep following \cite{RT}. 

The third class of surfaces is constructed as follows: consider for each dodecahedral facet $D$ of $P$ two opposite pentagonal ridges. There are exactly $10$ edges of $D$ that do not intersect these two opposite faces; these form an `equator'. The identifications of $P$ are such that the edges in $D$ that are identified are precisely the opposite ones (via translation). Because of this, by taking the cone over the center of $D$, each equator determines a piecewise smooth genus-two surface $b_i$ with a removable self-intersection at the cycle of vertices $v$. In particular, after a small perturbation $b_i$ becomes a smooth embedded surface of genus $2$ in $\D$. There are $120\times 6/2=360$ such cycles, where the factor of $2$ comes from the fact that opposite facets of $P$ are identified in $\D$. As it will be explained later, each $b_i$ is homologous to a totally geodesic genus two embedded surface. We assign to each $b_i$ an arbitrary orientation. From this concrete description the algebraic intersection with the surfaces $a_i$ is readily determined to be
\begin{equation}
Q( a_i, b_j)=
\begin{cases}
\pm 1 \text{ if the great circle corresponding to $a_i$ intersects $D$,}\\
0 \text{ otherwise; }
\end{cases}
\end{equation}
in particular, for each $b_j$ there are exactly $6$ classes $a_i$ with non-zero intersection with it. 
\\
\par
For the fourth and final class, we consider the $360$ classes $B_i$ obtained from the $b_i$ by any inside-out isometry. These classes can be described explicitly as follows. Consider a pentagonal ridge $F$ of $P$ (which is shared by exactly two facets). Each edge in $P$ belongs to three dodecahedral facets; therefore there are five facets of $P$ intersecting $F$ exactly in an edge (one for each edge $e$ of $F$); in each of these five facets $D$, consider the hexagon obtained by slicing through the plane passing through the edge $e$ and the center of $D$. We consider in this hexagon the diagonal connecting two opposite vertices which is parallel to $e$; notice that these two opposite vertices are midpoints
of opposite edges in $D$. The union of these diagonals in the five hexagons corresponding to the pentagonal ridge $F$ determines a `small circle' in $S^3$, the convex hull (in the hyperbolic $120$-cell) of which is a totally geodesic regular pentagon with angle $\pi/5$. By taking the union of the two pentagons corresponding to opposite facets in $P$ we obtain a totally geodesic surface $B_i$ of genus $2$. Each of these $B_i$ has two corresponding small circles, which are the two solid pentagons in \cite[Figure 3]{RT}. The images of these $720/2=360$ surfaces under an inside-out isometry are the totally geodesic representatives of the $b_i$ mentioned above. The intersections with the $a_i$ are determined explicitly to be
\begin{equation}
Q( a_i, B_j)=
\begin{cases}
\pm 2 \text{ if the great circle of $a_i$ intersects the ridges of $B_j$ in their center;}\\
\pm 1 \text{ if the great circle of $a_i$ intersects the small circles of $B_j$ in one point each; }\\
0\text{ otherwise. }
\end{cases}
\end{equation}
Here, in the second case the two intersections points are identified in $\D$ to a single transverse intersection point. From our description of the great circles we see that each class $a_i$ has intersection $\pm2$ with exactly $10/2=5$ classes $B_j$. Furthermore, we can see that each $a_i$ has intersection $\pm1$ with exactly $100$ classes $B_j$ as follows. Consider the $10$ dodecahedra intersected by the great circle of $a_i$.  There are $10\times 20$ edges not belonging to the pentagonal ridges in which these dodecahedra intersect, and each pair of such edges opposite in $P$ determines a $B_j$ intersecting $a_i$ in $\pm1$. Dually, each $B_j$ has intersection $\pm2$ with one $a_i$ and $\pm1$ with $20$ classes $a_i$. 
\par
The explicit form of the $72\times 360$ matrices $Q( a_i, b_j)$ and $Q( a_i, B_j)$ was determined using a computer code after suitably enumerating and orienting the surfaces $b_i$ and $B_i$.

\vspace{0.4cm}

\section{Computational aspects}\label{computation}

The main goal of this section is to describe the implementation of a computer-assisted proof of the following.

\begin{prop}\label{inequalities}
    If a spin$^c$ structure $\spin$ on $\D$ satisfies 
    \begin{equation}\label{adjunction}
|\langle c_1(\spin),[S]\rangle|\leq 2
\end{equation}
for each of the 864 surfaces $S$ constructed in Section \ref{genustwo}, then $|c_1(\spin)^2| \leq 32$.
\end{prop}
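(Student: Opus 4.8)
The plan is to convert the $864$ adjunction inequalities into a finite system of linear constraints on a vector with entries in $\{-1,0,1\}$, to rewrite $c_1(\spin)^2$ as an explicit integral quadratic form in that vector, and then to verify the bound by an exhaustive search made feasible by those constraints and by the isometry group of $\D$.

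First I would parametrize the spin$^c$ structures in play. Set $c = c_1(\spin)$ and $h = \mathrm{PD}(c) \in H_2(\D;\matZ)$, so that $\langle c,[S]\rangle = Q(h,[S])$ for every surface $S$. Since $Q(a_i,A_j)=2\delta_{ij}$, the classes $\{[a_i]\}$ form a $\matQ$-basis of $H_2(\D;\matQ)$, just like the $\{[A_i]\}$; writing $h=\sum_{i=1}^{72}y_i\,[a_i]$ with $y_i\in\matQ$ we get $\langle c,[A_i]\rangle=Q(h,A_i)=2y_i$. Every one of the $864$ homology classes of surfaces has self-intersection zero (the totally geodesic ones because the Levi-Civita connection on the normal bundle is flat, and each $b_i$ because it is homologous to a totally geodesic surface); since $c$ is characteristic we have $\langle c,[A_i]\rangle\equiv[A_i]\cdot[A_i]=0\bmod 2$, and with the adjunction inequality $|\langle c,[A_i]\rangle|\le 2$ this forces $\langle c,[A_i]\rangle\in\{-2,0,2\}$, i.e. $y_i\in\{-1,0,1\}$ — the cube in $\matZ^{72}$ mentioned in the Introduction. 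Moreover
\begin{equation*}
c_1(\spin)^2 \;=\; Q(h,h) \;=\; \sum_{i,j=1}^{72} y_iy_j\,Q(a_i,a_j) \;=\; \mathbf{y}^{\mathsf T}Q\,\mathbf{y},
\end{equation*}
where $Q=\big(Q(a_i,a_j)\big)_{ij}$ is the explicit symmetric $\{-1,0,1\}$-matrix of Section~\ref{genustwo}. So it is enough to bound $|\mathbf{y}^{\mathsf T}Q\,\mathbf{y}|$ over the $\mathbf{y}\in\{-1,0,1\}^{72}$ obeying the remaining $792$ inequalities; in particular one need not identify which of these $\mathbf{y}$ actually arise from spin$^c$ structures.

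Next I would put in the remaining inequalities. The same reasoning (self-intersection zero and characteristic give an even pairing, adjunction bounds its absolute value by $2$), applied now to the surfaces $a_k$, $b_\ell$ and $B_m$, produces the linear constraints
\begin{equation*}
(Q\mathbf{y})_k\in\{-2,0,2\},\qquad \sum_i y_i\,Q(a_i,b_\ell)\in\{-2,0,2\},\qquad \sum_i y_i\,Q(a_i,B_m)\in\{-2,0,2\}
\end{equation*}
for all $k,\ell,m$, with coefficients read off from the intersection tables of Section~\ref{genustwo}. These are restrictive: each $b_\ell$ involves only $6$ of the $y_i$, and each $B_m$ involves $21$ of them, one with coefficient $\pm 2$. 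Let $\mathcal{Y}\subset\{-1,0,1\}^{72}$ be the set cut out by all of these constraints; it suffices to check that $|\mathbf{y}^{\mathsf T}Q\,\mathbf{y}|\le 32$ for every $\mathbf{y}\in\mathcal{Y}$.

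Finally I would carry out the search. The subgroup $\mathrm{Isom}^+(P)\subset\mathrm{Isom}^+(\D)$ of order $14400$ permutes the $72$ great circles, hence acts on $\mathcal{Y}$ by signed permutations of the coordinates while preserving $\mathbf{y}^{\mathsf T}Q\,\mathbf{y}$; the inside-out isometries act on $H_2(\D)$ by further $Q$-preserving automorphisms, exchanging the families $\{a_i\}\leftrightarrow\{A_i\}$ and $\{b_i\}\leftrightarrow\{B_i\}$, and give extra identifications among the candidate classes. Using this, it is enough to examine a set of orbit representatives. Concretely I would run a backtracking search that assigns the $y_i$ in a carefully chosen order, prunes a branch as soon as a partial assignment violates one of the constraints above, and rejects branches that are not lexicographically minimal in their symmetry orbit, recording the extreme values of $\mathbf{y}^{\mathsf T}Q\,\mathbf{y}$ attained. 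I expect this last step to be the crux of the difficulty: since $|\{-1,0,1\}^{72}|\approx 2\times10^{34}$, everything hinges on the $792$ extra inequalities being restrictive enough, on a good variable ordering and propagation scheme, and on efficient symmetry reduction, to shrink the search to something that can be completed in roughly a month of CPU time. A secondary but very real difficulty is producing, and independently checking, the matrix $Q$ and the two $72\times360$ intersection tables from the combinatorics of the $120$-cell, since a single error there would invalidate the conclusion.
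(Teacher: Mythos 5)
Your proposal follows essentially the same route as the paper's own proof: write $\mathrm{PD}(c_1(\spin))=\sum_i x_i a_i$, use the adjunction inequalities against the $A_i$ together with an evenness argument to force $x_i\in\{-1,0,1\}$, impose the remaining $72+360+360$ linear inequalities coming from the $a_i$, $b_i$, $B_i$, and certify $|Q(x,x)|\leq 32$ by a pruned, symmetry-reduced backtracking search over the cube --- which is exactly the computer-assisted argument of Section \ref{computation}. The only (harmless) difference is that you get evenness of $\langle c_1(\spin),[A_i]\rangle$ from the characteristic property of $c_1$ plus $[A_i]^2=0$, while the paper uses that $\D$ is spin so that $c_1(\spin)$ is an even class.
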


This implies the main theorem stated in the Introduction, as explained there.

\subsection{The linear inequalities}
Recall that we have 72 surfaces $a_i$, 72 surfaces $A_i$, 360 surfaces $b_i$, and 360 surfaces $B_i$.
We fix the basis $a_i$ of $H_2(\D, \matQ)$. The intersection form is represented by the matrix $Q_{ij} = Q(a_i,a_j)$. 

Let $\spin$ be a class that satisfies the hypothesis of Theorem \ref{inequalities}, and $x = PD(c_1(\spin))$. Our aim is to prove that 
$$|Q(x,x)| \leq 32.$$
We use the basis $a_i$ and write
$$x = \sum_i x_ia_i$$
for some $x_i \in \matQ$ (since the $a_i$ do not form a basis for $H_2(\D, \matZ)$, the coefficients $x_i$ are not necessarily integers). Recall that $\D$ is a spin manifold \cite{RT}, so the elements $c\in H^2(\D, \matZ)$ which are the first Chern class of a spin$^c$ structure are exactly the even classes (that is, those divisible by two); our analysis is significantly simplified by the following.

\begin{lemma}
We have $x_i\in\{-1,0,1\}$ for all $i$.
\end{lemma}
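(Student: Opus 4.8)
The plan is to exploit the near-duality between the two families $\{a_i\}$ and $\{A_i\}$ encoded in the relation $Q(a_i,A_j)=2\delta_{ij}$. Since $x=PD(c_1(\spin))$, for every $\alpha\in H_2(\D,\matZ)$ one has $Q(x,\alpha)=\langle c_1(\spin),\alpha\rangle$. First I would pair $x=\sum_i x_i a_i$ against $A_j$: by bilinearity, $\langle c_1(\spin),[A_j]\rangle=Q(x,A_j)=\sum_i x_i\,Q(a_i,A_j)=2x_j$, so that $x_j=\tfrac12\langle c_1(\spin),[A_j]\rangle$ for each $j$. In particular the coordinates $x_j$ are a priori only half-integers, being halves of integers.

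Next I would feed in the adjunction inequality (\ref{adjunction}) for the surface $S=A_j$, which is one of the $864$ surfaces and which $\spin$ satisfies by the hypothesis of Proposition \ref{inequalities}: this yields $|\langle c_1(\spin),[A_j]\rangle|\le 2$, hence $|x_j|\le 1$.

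Finally I would promote half-integrality to integrality using that $\D$ is spin: then $c_1(\spin)$ is divisible by two in $H^2(\D,\matZ)$, so $\langle c_1(\spin),[A_j]\rangle$ is an \emph{even} integer; combined with $|\langle c_1(\spin),[A_j]\rangle|\le 2$ this forces $\langle c_1(\spin),[A_j]\rangle\in\{-2,0,2\}$, and therefore $x_j\in\{-1,0,1\}$, as claimed. The argument is short and I do not anticipate a genuine obstacle; the only subtlety is keeping track of the factor $2$ in $Q(a_i,A_j)$, which is precisely what makes the evenness of $c_1(\spin)$ upgrade the $\tfrac12\matZ$-valuedness of the $x_j$ to $\matZ$-valuedness, rather than merely bounding them.
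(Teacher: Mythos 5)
Your argument is correct and is essentially the paper's own proof: pairing $x=\sum_i x_i a_i$ against $A_j$ to get $Q(x,A_j)=2x_j$, using evenness of $c_1(\spin)$ (since $\D$ is spin) to conclude $x_j\in\matZ$, and the adjunction inequality for $A_j$ to get $|x_j|\le 1$. No gaps.
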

\begin{proof}
Since $c_1(\spin)$ is even, its dual $x$ also is, and hence
\begin{equation*}
Q( x,A_j)=Q( \sum x_i a_i,A_j) =2x_j
\end{equation*}
is an even number for all $j=1,\dots, 72$, so that all the $x_j$ are integers. The conclusion follows because $|Q( x,A_j)|\leq 2$ by (\ref{adjunction}).
\end{proof}

We can therefore restrict our attention to the $3^{72}$ classes $x$ whose coordinates lie in the cube $\{-1,0,1\}^{72}$. During the proof of the lemma we have exploited that $|Q(x,A_i)| \leq 2$. We still have to impose the $72+360+360$ inequalities
$$|Q(x,a_i)| \leq 2, \quad |Q(x,b_i)| \leq 2, \quad |Q(x,B_i)| \leq 2.
$$
The first type of inequality may be rewritten as
$$\big|\sum_j Q_{ij} x_i\big| \leq 2.$$
We discovered in Section \ref{genustwo} that there are precisely 47 non-zero coefficients among the $Q_{ij}$, for any fixed $i$, and these are all $\pm 1$. We also found that, for each $b_j$, there are exactly $6$ classes $a_i$ with non-zero intersection with it: therefore the second type of inequalities takes the simple form
\begin{equation} \label{bj}
|x_{i_1}\pm x_{i_2}\pm x_{i_3}\pm x_{i_4}\pm x_{i_5}\pm x_{i_6}|\leq2
\end{equation}
for six distinct indices $i_1,\dots,i_6$ that depend on $b_j$. Analogously, the inequality corresponding to a $B_j$ takes the form
\begin{equation} \label{Bj}
|2x_{i_0}\pm x_{i_1} \pm x_{i_2} \cdots \pm x_{i_{20}}|\leq2
\end{equation}
for some distinct indices $i_0,\dots,i_{20}$ depending on $B_j$.

\subsection{The implementation}
We have written a code in Sage, available from \cite{code}, which can be used to investigate all the vectors $x \in \{-1,0,1\}^{72}$ that satisfy the $72+360+360$ inequalities described above. Since $x$ represents an even class, and $Q$ is also itself even, we know that $Q(x,x)$ is divisible by eight. By running our code we discover that there are many classes $x$ that satisfy the given inequalities, and for all these classes the absolute value of the self-intersection $Q(x,x)$ is either $0$, $8$, $16$, $24$, or $32$. This proves the Proposition.

We say a few words on the code. The number of lattice points to be investigated is $3^{72}\approx 2\times 10^{34}$, and it is impossible to test the $792$ inequalities separately on each point in a reasonable time. We have not found in the literature any general method to reduce the complexity of such a problem; to considerably reduce the CPU time of the running code, we have proceeded instead as follows. First, we take advantage of the many isometries of $P$. The 14400 isometries of $\D$ act on the set $\{-1,0,1\}^{72}$, and it is also possible to invert all signs, so the problem has 28800 symmetries overall; we say that an element $x \in \{-1,0,1\}^{72}$ is \emph{minimal} if it is minimal (with respect to the lexicographic order) in its orbit. We also say that it is \emph{admissible} if it satisfies all the $792$ inequalities. Of course it is sufficient for us to test only minimal admissible elements $x$.

We have considered the variable $x_i \in \{-1,0,1\}$ iteratively, starting from $i=1,2,3, \ldots$ trying to cut as many dead branches as possible, to reduce the number of elements under investigation from $3^{72}$ to a much smaller number. Every time we assign a value to some $x_i$, we do a number of tests to see if the initial segment $(x_1,\ldots, x_i)$ has some obstruction that would prevent any completion $(x_1,\ldots, x_{72})$ to be both minimal and admissible. The minimality test is easy to implement, and it is very effective due to the large number of isometries; the admissibility tests are the more effective when the corresponding inequality has less non-zero coefficients: in particular the inequalities \eqref{bj} 
are very much effective, and the \eqref{Bj} are also crucial to reduce the running time to a resonable size. We really needed to use all the inequalities to complete the task in reasonable time. The count was completed roughly in a month of CPU time.

\vspace{0.5cm}

\bibliographystyle{alpha}
\bibliography{biblio}

\end{document}